\newcommand{\R}{\ensuremath{\mathbb{R}}}
\newcommand{\K}{\ensuremath{\mathcal{K}}}
\DeclareMathOperator{\dist}{\textnormal{dist}}
\renewcommand{\S}{\ensuremath{\mathbb{S}}}
\newcommand{\wOmega}{\ensuremath{\widetilde{\Omega}}}
\newcommand{\hOmega}{\ensuremath{\widehat{\Omega}}}
\DeclareMathOperator{\reg}{reg}
\DeclareMathOperator{\regn}{\mathcal{U}}
\newtheorem{theorem}{Theorem}[section]
\newtheorem{lemma}[theorem]{Lemma}
\newtheorem{remark}[theorem]{Remark}
\newtheorem{corollary}[theorem]{Corollary}
\numberwithin{theorem}{section}
\numberwithin{definition}{section}
\newcommand{\thmref}[1]{Theorem~\ref{#1}}
\begin{document}

\title{A bound for the perimeter of inner parallel bodies}

\author{Simon Larson}
\address{Department of Mathematics, Royal Institute of Technology, SE-10044 Stockholm, Sweden}
\email{simla@math.kth.se}

\subjclass[2010]{Primary 52A20, 52A38; Secondary 52A40, 52A39}

\keywords{Convex geometry, perimeter, inner parallel sets}

\begin{abstract}
We provide a sharp lower bound for the perimeter of the inner parallel sets of a convex body $\Omega$. The bound depends only on the perimeter and inradius $r$ of the original body and states that
\[|\partial\Omega_t| \geq \Bigl(1-\frac{t}{r}\Bigr)^{n-1}_+ |\partial \Omega|.\]
In particular the bound is independent of any regularity properties of $\partial\Omega$. As a by-product of the proof we establish precise conditions for equality. The proof, which is straightforward, is based on the construction of an extremal set for a certain optimization problem and the use of basic properties of mixed volumes.

This is a revised version of the paper published in J.\ Funct.\ Anal. (2016) where an error is addressed in accordance with a corrigendum to appear in J.~Funct.\ Anal. The main result of the paper remains the same but an error in Lemma~2.1 has been corrected and the subsequent proofs have been adapted accordingly.
\end{abstract}

\maketitle

% ------------------  Introduction  --------------------

\section{Introduction}
Given a convex domain $\Omega \subset \R^n$ we consider the family of its \emph{inner parallel sets}. We denote by $\Omega_t$ the inner parallel set at distance $t\geq0$, which is defined by 
\begin{equation*}
	\Omega_t=\{x\in\Omega : \dist(x, \Omega^c)\geq t\} = \Omega \sim tB.
\end{equation*}
Here $B$ is the unit ball in $\R^n$ and $\sim$ denotes the \emph{Minkowski difference}; a precise definition is given in Section~\ref{sec:NotationPrel}. Correspondingly, the \emph{outer parallel set} at distance $t\geq 0$ is the set 
\begin{equation*}
	\{x\in\R^n : \dist(x, \Omega)\leq t\} = \Omega + tB,
\end{equation*}
where $+$ denotes the \emph{Minkowski sum}. In this paper we provide a lower bound for the perimeter of $\Omega_t$ in terms of the perimeter of $\Omega$. 

An important result in the theory of outer parallel sets is the so-called \emph{Steiner formula}
\begin{equation}\label{eq:SteinerFormula}
	|\Omega + tB| = \sum_{i=0}^n \binom{n}{i} t^i W_i(\Omega),
\end{equation}
where coefficients $W_i$ of the polynomial are the \emph{quermassintegrals} of $\Omega$, which are a special case of mixed volumes (see Section~\ref{sec:NotationPrel}). The set of quermassintegrals contains several important geometric quantities: for instance we have that $W_0(\Omega)=|\Omega|$ and $nW_1(\Omega)=|\partial \Omega|$. There are analogous formulae to~\eqref{eq:SteinerFormula}, called the \emph{Steiner formulae}~\cite{Schneider1}, that express the value of the $i$-th quermassintegral of $\Omega+tB$ in terms of $W_j(\Omega)$, for $i\leq j\leq n$. The Steiner formula appears not only in convex geometry, and important applications may be found in Federer's work on curvature measures in geometric measure theory (see~\cite{Federer}) and Weyl's tube formula in differential geometry (see~\cite{Weyl}). 

For inner parallel sets there is, in general, no counterpart to the Steiner formula. Matheron conjectured in~\cite{Matheron} that the volume of a Minkowski difference is bounded from below by the \emph{alternating Steiner polynomial.}\/ If we restrict our attention to inner parallel sets he conjectured that
\begin{equation*}
	|\Omega \sim t B| \geq \sum_{i=0}^n \binom{n}{i} (-t)^i W_i(\Omega).
\end{equation*}
The precise conjecture was a more general statement where $B$ is replaced by a general convex body and the quermassintegrals are replaced by mixed volumes. However, the conjecture was proved to be false by Hern{\'a}ndez Cifre and Saor{\'{\i}}n in~\cite{HernandezSaorin2}. 
\FloatBarrier

In addition to the lack of a Steiner-type formula, the Minkowski difference is far from being as well behaved as the Minkowski sum. In contrast to the Minkowski sum the difference is not a vectorial operation. Moreover, the regularity properties of $\Omega\sim tB$ may be very different from those of $\Omega$. Both of these properties are demonstrated in Figure~\ref{fig:InnerParallelSet}. Nonetheless, the theory of inner parallel sets is rich and has several beautiful applications in both convex geometry and analysis (see for instance~\cite{Bol1943, Brannen, LinkeSaorin, Sangwine-Yager, Sangwine-Yager3}).

In~\cite{HernandezSaorin1} the authors prove bounds for the quermassintegrals of inner parallel sets in a more general setting than that described above. Instead of considering the sets $\Omega\sim tB$, $t\geq 0$, they consider $\Omega\sim tE$ for some convex set $E$. The inequalities obtained in this paper are closely related to those in~\cite{HernandezSaorin1}, and using similar techniques the results here could, at least in some sense, be generalized to the same setting. However, in such generalizations the connection to the spectral theoretic applications that motivated the work in this paper is lost.

\begin{figure}[h]
	\centering
		
\begin{tikzpicture}[scale=2.8/4]

% ------------------  rectangle  --------------------

	\coordinate (a1) at (0,0);
	\coordinate (a2) at (0,3);
	\coordinate (a3) at (4,3);
	\coordinate (a4) at (4,0);
	\node at (-0.3,3.1) {{\,}};
	\draw[thick] (a1)--(a2)--(a3)--(a4)--(a1);

	\coordinate (ai1) at (15/26,15/26);
	\coordinate (ai2) at (15/26,3-15/26);
	\coordinate (ai3) at (4-15/26,3-15/26);
	\coordinate (ai4) at (4-15/26,15/26);
	%\fill [gray!20!white]
  	%	(ai1) \foreach \i in {2,3,4}{ -- (ai\i) } -- cycle;
  	\draw[thick, dashed] (ai1)--(ai2)--(ai3)--(ai4)--(ai1);

	\draw[dashed] (ai1) circle [radius=15/26];
	\draw[<->] (ai1)--(0,15/26);

	\node at (16/52,15/26+6.5/26) {{$t$}};
	\node at (2,1.5) {$\widetilde\Omega_t$};
	\node at (3.72,1.53) {$\widetilde\Omega$};

% ------------------  rounded rect  --------------------

	\coordinate (a1) at (5,15/26);
	\coordinate (a2) at (5,3-15/26);
	\coordinate (a3) at (5+15/26,3);
	\coordinate (a4) at (5+4-15/26,3);
	\coordinate (a5) at (5+4,3-15/26);
	\coordinate (a6) at (5+4,15/26);
	\coordinate (a7) at (5+4-15/26,0);
	\coordinate (a8) at (5+15/26,0);		
	\draw[thick] (a1)--(a2);
	\draw[thick] (a3)--(a4);
	\draw[thick] (a5)--(a6);
	\draw[thick] (a7)--(a8);
	\draw [thick] (a1) arc [radius=15/26, start angle=-180, end angle= -90];
	\draw [thick] (a2) arc [radius=15/26, start angle=180, end angle= 90];
	\draw [thick] (a4) arc [radius=15/26, start angle=90, end angle= 0];
	\draw [thick] (a7) arc [radius=15/26, start angle=-90, end angle= 0];

	\coordinate (ai1) at (5+15/26,15/26);
	\coordinate (ai2) at (5+15/26,3-15/26);
	\coordinate (ai3) at (5+4-15/26,3-15/26);
	\coordinate (ai4) at (5+4-15/26,15/26);
	%\fill [gray!20!white]
  	%	(ai1) \foreach \i in {2,3,4}{ -- (ai\i) } -- cycle;
  	\draw[thick, dashed] (ai1)--(ai2)--(ai3)--(ai4)--(ai1);

	\draw[dashed] (ai1) circle [radius=15/26];
	\draw[<->] (ai1)--(5,15/26);

	\node at (5+16/52,15/26+6.5/26) {{$t$}};
	\node at (5+2,1.5) {$\widehat\Omega_t$};
	\node at (5+3.72,1.53) {$\widehat\Omega$};

% ------------------  ellipse  --------------------

	\def \a {3};
	\def \b {1};
	\def \t {15/26};
	\def \angle {2.65*pi/4}
	\def \thetaS {0.51};%{acos(\a*sqrt({\b^2-\t^2})/{\b*sqrt({\a^2-\b^2})}};
	\def \thetaE {2.63};%{acos(-1*\a*sqrt({\b^2-\t^2})/{\b*sqrt({\a^2-\b^2})}};

	\draw[thick, domain=0:6.29, samples=100]   plot ({10+\a+\a*cos(\x r)},{1.5+\b*sin(\x r)})   node[right] {};

	\draw[thick, dashed, domain=\thetaS:\thetaE, samples=100] %, fill=gray!20!white]   
	plot ({10+\a+\a*cos(\x r)-\t*\b*cos(\x r)/sqrt(\a^2*sin(\x r)^2+\b^2*cos(\x r)^2)},
	{1.5+\b*sin(\x r)-\t*\a*sin(\x r)/sqrt(\a^2*sin(\x r)^2+\b^2*cos(\x r)^2)})   node[right] {};

	\draw[thick, dashed, domain=-\thetaS:-\thetaE, samples=100] %, fill=gray!20!white] 
	plot ({10+\a+\a*cos(\x r)-\t*\b*cos(\x r)/sqrt(\a^2*sin(\x r)^2+\b^2*cos(\x r)^2)},
	{1.5+\b*sin(\x r)-\t*\a*sin(\x r)/sqrt(\a^2*sin(\x r)^2+\b^2*cos(\x r)^2)})   node[right] {};

	\draw[dashed] ({10+\a+\a*cos(\thetaE r)-\t*\b*cos(\thetaE r)/sqrt(\a^2*sin(\thetaE r)^2+\b^2*cos(\thetaE r)^2)},1.5) circle [radius={\t-1/45}];

	\draw[<->] ({10+\a+\a*cos(\thetaE r)-\t*\b*cos(\thetaE r)/sqrt(\a^2*sin(\thetaE r)^2+\b^2*cos(\thetaE r)^2)},1.5)
	--
	({10+\a+\a*cos(\thetaE r)-\t*\b*cos(\thetaE r)/sqrt(\a^2*sin(\thetaE r)^2+\b^2*cos(\thetaE r)^2)+\t*cos(\angle r)},{1.5+\t*sin(\angle r)}) ;

	\node at ({10+\a+\a*cos(\thetaE r)-\t*\b*cos(\thetaE r)/sqrt(\a^2*sin(\thetaE r)^2+\b^2*cos(\thetaE r)^2)+\t/5},1.5+\t/2) {$t$};
	\node at (13.1,1.5) {$\overline\Omega_t$};
	\node at ({10+\a+\a*cos(\thetaS r)-\t*\b*cos(\thetaS r)/sqrt(\a^2*sin(\thetaS r)^2+\b^2*cos(\thetaS r)^2)+\t/2},1.53) {$\overline\Omega$};

\end{tikzpicture}
\caption{The inner parallel sets of some convex bodies in $\R^2$. Note that $\wOmega_t$ is equal to $\widehat\Omega_t$ even though $\wOmega\neq \widehat\Omega$.}
	\label{fig:InnerParallelSet}
\end{figure}

The main result of this paper is an improvement of the following theorem which is obtained in~\cite{vandenBerg} using the Steiner formula to bound the perimeter of outer parallel sets. 

\begin{theorem}[Modified Steiner inequality~\cite{vandenBerg}]\label{thm:vandenBerg}
Let $\Omega$ be a convex domain in $\R^n$ with volume $|\Omega|$ and surface area $|\partial \Omega|$ such that at each point the principal curvatures of $\partial\Omega$ are bounded from above by $1/K$ for some $K>0$. Then for any $t\geq 0$ we have the bound
\begin{equation*} 
	|\partial \Omega_t| \geq |\partial \Omega| \Bigl(1- \frac{n-1}{K}t\Bigr)_{+}.
\end{equation*}
Further, for any $0\leq t<K$ the principal curvatures of $\partial \Omega_t$ are bounded from above by $(K-t)^{-1}.$ 
\end{theorem}

Our study of this problem is motivated by work of Geisinger, Laptev and Weidl in~\cite{LaptevGeisingerWeidl} where they use \thmref{thm:vandenBerg} to obtain bounds on the Riesz eigenvalue means for the Dirichlet Laplacian on a convex domain $\Omega\subset\R^n$. For convex domains in the plane satisfying the inequality
\begin{equation}\label{eq:LapConjecture}
 	|\partial\Omega_t|\geq \Bigl(1- \frac{3t}{\omega}\Bigr)_+|\partial\Omega|,
\end{equation} 
where $\omega$ denotes the width of $\Omega$, the authors further improve these bounds. Moreover, the authors conjecture that~\eqref{eq:LapConjecture} holds for any planar convex domain. In this paper we prove that the bound~\eqref{eq:LapConjecture} holds for any convex set in $\R^2$ and that similar bounds hold in arbitrary dimension. 

\FloatBarrier
We now turn to our main result which is contained in the next theorem. In~\cite{Larson} the techniques of~\cite{LaptevGeisingerWeidl} are combined with this result to obtain further geometrical improvements of Berezin-type bounds for the Dirichlet eigenvalues of the Laplacian on convex domains.
\begin{theorem}\label{thm:MainRin}
	Let $\Omega\subset \R^n$ be a convex domain with inradius $r$. Then, for any inner parallel set $\Omega_t$, $t\geq0$, it holds that
	\begin{equation*}
	  	|\partial \Omega_t| \geq \Bigl(1- \frac{t}{r}\Bigr)^{n-1}_+|\partial\Omega|.
	\end{equation*} 
	Further, equality holds for some $t\in(0, r)$ if and only if $\Omega$ is homothetic to its form body\footnote{The precise definition of the form body of a convex set $\Omega$ will be given in Section~\ref{sec:NotationPrel} (see also~\cite{HernandezSaorin1,Schneider1}).}. If this is the case equality holds for all $t\geq 0$.
\end{theorem}
Using the above theorem and known bounds for the inradius and width of a convex body we are able to conclude that the conjectured inequality~\eqref{eq:LapConjecture} holds and provide the following generalization to higher dimensions.
\begin{corollary}\label{cor:CorMainWidth}
 	Let $\Omega\subset \R^n$ be a convex domain with width $\omega$. Then, for the inner parallel sets of $\Omega$ we have that
 	\begin{align*}
 		|\partial\Omega_t|&\geq \Bigl(1- \frac{2 \sqrt{n}}{\omega}\,t\Bigr)_+^{n-1}|\partial \Omega| \hspace{29pt}\textrm{if }n\textrm{ is odd,}\\[4pt]
 		|\partial\Omega_t|&\geq \Bigl(1- \frac{2(n+1)}{\omega\sqrt{n+2}}\, t\Bigr)_+^{n-1}|\partial \Omega| \quad\textrm{if }n\textrm{ is even.}
 	\end{align*}
 	In both cases equality holds if $\Omega$ is a regular $(n+1)$-simplex.
\end{corollary}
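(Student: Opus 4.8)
The plan is to deduce the corollary directly from \thmref{thm:MainRin} by trading the inradius $r$ in the bound \eqref{eq:MainIneqRin} for the width $\omega$. The only extra ingredient needed is a sharp lower bound for the inradius in terms of the minimal width, which is exactly the content of Steinhagen's inequality: for every convex body in $\R^n$ with inradius $r$ and width $\omega$ one has $\omega \le 2\sqrt{n}\,r$ when $n$ is odd and $\omega \le \frac{2(n+1)}{\sqrt{n+2}}\,r$ when $n$ is even, with equality precisely for the regular simplex. Rewriting these as $r \ge \frac{\omega}{2\sqrt{n}}$ and $r \ge \frac{\omega\sqrt{n+2}}{2(n+1)}$ respectively produces the constants $\frac{2\sqrt n}{\omega}$ and $\frac{2(n+1)}{\omega\sqrt{n+2}}$ appearing in the statement. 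This is why the corollary explicitly refers to ``known bounds for the inradius and width''.

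First I would fix $t\ge 0$ and record the elementary monotonicity fact that $s\mapsto\bigl(1-\tfrac ts\bigr)_+$ is nondecreasing on $(0,\infty)$ and that $x\mapsto x^{n-1}$ is nondecreasing on $[0,\infty)$, so that $r\mapsto\bigl(1-\tfrac tr\bigr)_+^{n-1}$ is nondecreasing. Combining \thmref{thm:MainRin} with the lower bound on $r$ from Steinhagen's inequality then yields, in the odd case,
\[
|\partial\Omega_t| \ge \Bigl(1-\frac tr\Bigr)_+^{n-1}|\partial\Omega| \ge \Bigl(1-\frac{2\sqrt n}{\omega}\,t\Bigr)_+^{n-1}|\partial\Omega|,
\]
and the analogous chain in the even case with $\frac{2\sqrt n}{\omega}$ replaced by $\frac{2(n+1)}{\omega\sqrt{n+2}}$. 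The one point needing a little care is the positive part: if $1-\frac{2\sqrt n}{\omega}t<0$ the right-hand side vanishes and the inequality is trivial, whereas if it is nonnegative then $r\ge\frac{\omega}{2\sqrt n}$ gives $1-\frac tr\ge 1-\frac{2\sqrt n}{\omega}t\ge 0$, so both bases are nonnegative and the monotonicity of $x\mapsto x^{n-1}$ applies without any sign ambiguity.

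For the equality assertion I would verify that the regular $(n+1)$-simplex saturates both inequalities simultaneously. It is the extremal body in Steinhagen's inequality, so $r=\frac{\omega}{2\sqrt n}$ (respectively $r=\frac{\omega\sqrt{n+2}}{2(n+1)}$), which turns the second inequality in the chain above into an equality. Moreover the regular simplex is a tangential body of its inball—each facet is tangent to the inscribed ball—so it is homothetic to its form body, and the equality clause of \thmref{thm:MainRin} forces equality in \eqref{eq:MainIneqRin} for all $t\ge 0$. Chaining the two equalities gives equality in the corollary, for every $t\ge0$.

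The deduction is short, and the substantive content is imported from \thmref{thm:MainRin} and from Steinhagen's inequality. Accordingly, the main thing requiring attention is not a difficult estimate but bookkeeping: invoking Steinhagen's inequality with the correct constants in the odd and even cases and checking that they reproduce $\frac{2\sqrt n}{\omega}$ and $\frac{2(n+1)}{\omega\sqrt{n+2}}$, together with confirming that the regular simplex realizes equality in Steinhagen's inequality while also meeting the homothety-to-form-body condition of \thmref{thm:MainRin}. Once these are in place the remaining steps are a routine monotonicity manipulation.
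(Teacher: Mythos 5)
Your proposal is correct and follows exactly the paper's route: the paper likewise deduces the corollary by combining \thmref{thm:MainRin} with Steinhagen's inequality, substituting the resulting lower bound for the inradius in terms of the width and noting that the regular simplex is extremal for Steinhagen and homothetic to its form body. Your added care about the positive part and the monotonicity in $r$ is a correct (if routine) elaboration of what the paper leaves implicit.
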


The result developed here is in several aspects an improvement of~\thmref{thm:vandenBerg}. Firstly, the assumptions on $\Omega$ are less restrictive. We require only convexity whilst the earlier result requires the principal curvatures of $\partial\Omega$ to be bounded. Further, by noting that
\begin{equation*}
	\Bigl(1-\frac{t}{K}\Bigr)^{n-1}_+ \geq \Bigl(1-\frac{(n-1)}{K}t\Bigr)_+
\end{equation*}
and that the maximum of the principal curvatures of the boundary of a convex set is always larger than the reciprocal of its inradius one can conclude that~\thmref{thm:MainRin} implies~\thmref{thm:vandenBerg}. We also note that if $t$ is less than the reciprocal of the maximal principal curvature then $\Omega=\Omega_t + tB$. In general the set $\Omega$ cannot be determined from $\Omega_t$ and $t$.

% ------------------  Notation & Prelim -------------------

\FloatBarrier
\subsection{Notation and preliminaries}\label{sec:NotationPrel}

Let $\K^n_0$ denote the set of all convex bodies in $\R^n$ that have nonempty interior. Throughout the paper $\Omega$ will belong to $\K^n_0$. Let $B$ denote the closed unit ball in $\R^n$ and let $\S^{n-1}$ denote the corresponding sphere. A closed ball of radius $r$ centred at $x\in \R^n$ is denoted by $B_r(x)$. For notational simplicity we denote both volume and surface measure by $|\cdot|$. This will appear in two forms, the volume of a set $|\Omega|$ and the surface measure of its boundary $|\partial \Omega|$. Further, we will make use of the notation $x_\pm=(|x|\pm x)/2.$

For two sets $K, L\in\K^n_0$ the \emph{Minkowski sum} ($+$) and \emph{difference} ($\sim$) are defined by
\begin{align*}
		K+L &:= \{ x+y : x\in K, y\in L\}, \\
		K\sim L &:= \{ x\in \R^n : x+L \subseteq K\}.
\end{align*}
It is a direct consequence of the definitions that we, as claimed in the introduction, equivalently can define the inner parallel body $\Omega_t$, $t\geq 0$, as $\Omega\sim tB$ \cite{Schneider1}. Similarly the outer parallel body can be written as $\Omega+tB$. 

The \emph{inradius} $r$ of a set $\Omega\in \K^n_0$ is defined as the radius of the largest ball contained in $\Omega$, or equivalently (see for instance \cite{Schneider1}) as
\begin{equation*}
	r = \sup \{\lambda \geq 0 : \Omega\sim \lambda B \neq \emptyset\}.
\end{equation*}

The observation contained in the next lemma is intuitively clear but of central importance in what follows. 
\begin{lemma}\label{lem:InradiusLemma}
	Let $\Omega\in \K^n_0$ have inradius $r_0$. Then, for any $t\in [0, r_0]$ the inradius $r_t$ of $\Omega_t$ satisfies
	\begin{equation*}
	 	r_t = r_0-t.
	 \end{equation*}
\end{lemma}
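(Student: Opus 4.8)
The plan is to reduce the whole statement to a single associativity property of the Minkowski difference together with the characterization of the inradius as a supremum. First I would recall the elementary identity $(K \sim L) \sim M = K \sim (L+M)$, valid for all convex bodies. This follows at once by writing the Minkowski difference as an intersection of translates, $K \sim L = \bigcap_{y \in L}(K - y)$, and noting that intersecting over $y\in L$ and then over $z\in M$ is the same as intersecting over $w \in L+M$ (this is standard, see Schneider). Applying it with $L = tB$ and $M = \lambda B$, and using $tB + \lambda B = (t+\lambda)B$ for $t,\lambda \geq 0$, yields the semigroup property
\[
	\Omega_t \sim \lambda B = (\Omega \sim tB) \sim \lambda B = \Omega \sim (t+\lambda)B = \Omega_{t+\lambda}.
\]

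Next I would feed this into the definition of the inradius. By definition $r_t = \sup\{\lambda \geq 0 : \Omega_t \sim \lambda B \neq \emptyset\}$, which by the displayed identity equals $\sup\{\lambda \geq 0 : \Omega_{t+\lambda} \neq \emptyset\}$. It then remains only to decide for which $s\geq 0$ the set $\Omega_s$ is nonempty. Since $\Omega\in\K^n_0$ is compact and convex, it contains a ball of radius $r_0$, say $B_{r_0}(x_0) \subseteq \Omega$; then $x_0 \in \Omega\sim r_0 B = \Omega_{r_0}$, so $\Omega_{r_0} \neq \emptyset$, whereas for $s > r_0$ the definition of $r_0$ forces $\Omega_s = \emptyset$. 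Because the family $\{\Omega_s\}_{s\geq 0}$ is nested and decreasing in $s$, this gives the dichotomy that $\Omega_s \neq \emptyset$ precisely when $0 \leq s \leq r_0$.

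Combining these two observations, for $t \in [0, r_0]$ the condition $\Omega_{t+\lambda}\neq\emptyset$ is equivalent to $\lambda \leq r_0 - t$, whence
\[
	r_t = \sup\{\lambda \geq 0 : \lambda \leq r_0 - t\} = r_0 - t.
\]
The only point requiring any real care is the nonemptiness dichotomy, that is, the fact that the supremum defining $r_0$ is actually attained so that $\Omega_{r_0}\neq\emptyset$; this is precisely where the compactness of $\Omega$ enters, and it is what rules out a spurious strict inequality $r_t < r_0 - t$. Everything else is formal manipulation of the semigroup identity.
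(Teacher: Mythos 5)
Your proof is correct, but it takes a genuinely different route from the paper's. The paper argues directly with balls on both sides: the inclusion $B_{r_0-t}(x_0)\subseteq\Omega_t$, obtained by shrinking an inball of $\Omega$, gives $r_t\geq r_0-t$, and conversely any inball $B_{r_t}(x_t)\subseteq\Omega_t$ satisfies $B_{r_t}(x_t)+tB\subseteq\Omega$, giving $r_0\geq r_t+t$. You instead invoke the associativity identity $(K\sim L)\sim M=K\sim(L+M)$ to get the semigroup property $\Omega_t\sim\lambda B=\Omega_{t+\lambda}$, and then read off $r_t$ from the supremum characterization of the inradius together with the dichotomy that $\Omega_s\neq\emptyset$ exactly for $0\leq s\leq r_0$. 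Both arguments are complete. Yours leans on a standard lemma from Schneider plus the equivalence of the two definitions of the inradius (which the paper itself records), and has the advantage of generalizing immediately to $\Omega\sim tE$ for an arbitrary convex $E$ in place of the ball; the paper's is self-contained and uses nothing beyond the definition of the Minkowski difference and the triangle inequality for distances. The one point you rightly flag --- attainment of the supremum defining $r_0$, so that $\Omega_{r_0}\neq\emptyset$ --- is exactly the content of the paper's first inclusion, and is where compactness of $\Omega$ enters in both treatments. (A harmless edge case: at $t=r_0$ the set $\Omega_{r_0}$ may be lower-dimensional and hence not in $\K^n_0$, but its inradius is $0$ under either definition, so the formula still holds there.)
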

\begin{proof}
	Let $x_0\in \R^n$ be such that $B_{r_0}(x_0)\subseteq \Omega$. For each $x\in  B_{r_0}(x_0)$ we have that $\dist(x, \partial\Omega)\geq \dist(x, \partial B_{r_0}(x_0))$ and hence
	\begin{equation*}
		\Omega_t \supseteq \bigl(B_{r_0}(x_0)\bigr)_t = B_{(r_0-t)}(x_0).
	\end{equation*}
	We conclude that $r_t\geq r_0-t$. To prove the reverse inequality we observe that for any $x_t\in \R^n$ such that $ B_{r_t}(x_t)\subseteq \Omega_t$ we have that $\dist(B_{r_t}(x_t), \partial\Omega) \geq t$. Which implies that 
	\begin{equation*}
	 	B_{(r_t+t)}(x_t)= B_{r_t}(x_t)+t B \subseteq \Omega,
	 \end{equation*}
	 and consequently $r_0\geq r_t+t$.
\end{proof}

A classic result in convex geometry is that the volume of a Minkowski sum $\lambda_1 K_1+\dots+\lambda_m K_m$ is, for $\lambda_1, \dots, \lambda_m\geq 0$ and $K_1, \dots, K_m\in \K^n_0$, a homogeneous $n$-th degree polynomial in the $\lambda_i$ with positive coefficients (see~\cite{BonnesenFenchel, Schneider1}). That is, we can write
\begin{equation*}
	|\lambda_1 K_1+\dots + \lambda_m K_m| = \sum_{i_1=1}^m\ldots\sum_{i_n=1}^m \lambda_{i_1}\!\!\cdots\lambda_{i_n} W(K_{i_1}, \dots, K_{i_n}),
\end{equation*}
where $W$ is symmetric with respect to its arguments. The $W(K_{i_1}, \dots, K_{i_n})$ are called the \emph{mixed volumes} of $K_{1}, \dots, K_{m}$. In what follows we will use several properties of $W\!$. We list the properties here and for proofs refer to~\cite{BonnesenFenchel, Schneider1}:
\begin{itemize}
	\item $W$ is a symmetric functional on $n$-tuples of sets in $\K^n_0$. 
	\item $W$ is multilinear with respect to Minkowski addition:
	\begin{equation*}
		W(\lambda K+\lambda'K', K_2, \dots, K_{n}) = \lambda W(K, K_2, \dots, K_n)+\lambda' W(K', K_2, \dots, K_n).
	\end{equation*}
	\item $W$ is monotonically increasing with respect to inclusions.
	\item $W$ is invariant under translations in each argument.
	\item The perimeter of $K\in \K^n_0$ is, up to a constant, equal to a mixed volume:
	\begin{equation*}
		|\partial K| = nW(B, K, \dots, K).
	\end{equation*}
\end{itemize}

We will by $h(K, u)$ denote the \emph{support function} of $K \in \K_0^n$ which is defined for any $u\in \R^n$ as
\begin{equation*}
	h(K, u)= \sup_{x\in K} \langle x, u\rangle.
\end{equation*}
The restriction of $h(K, u)$ to $u\in\S^{n-1}$ reduces to the function describing the distance from the origin to the supporting hyperplane of $K$ with normal $u$. In what follows we denote such a supporting hyperplane by $H(K, u)$. We then have the following characterization of the supporting hyperplanes of $K$:
\begin{equation*}
 	H(K, u) = \{ x\in\R^n : \langle x, u\rangle = h(K, u)\}.
\end{equation*} 
The following properties of $h(K, u)$ will be needed later:
\begin{itemize}
 	\item For any $K, L \in \K^n_0$ and $\alpha, \beta>0$ it holds that 
 	\begin{equation*}
 		h(\alpha K+\beta L, u)=\alpha h(K, u)+\beta h(L, u).
 	\end{equation*}
 	\item For any $u\in \S^{n-1}$ and $K, L \in K^n_0$ it holds that
 	\begin{equation*}
 		h(K\sim L, u)\leq h(K, u)-h(L, u).
 	\end{equation*}
 	\item For $x \in \partial(K\sim L)$ there exists a normal vector $u$ of $\partial(K\sim L)$ at $x$ such that
 	\begin{equation*}
 	 	h(K\sim L, u)=h(K, u)-h(L, u).
 	\end{equation*}
\end{itemize}
Proofs of the above properties can be found in~\cite{Schneider1}.

The \emph{width} $\omega$ of $\Omega \in \K^n_0$ is defined as
\begin{equation*}
 	\omega = \inf \{ h(\Omega, u) + h(\Omega, -u) : u\in \S^{n-1}\}.
\end{equation*}
A point $x\in\partial\Omega$ is called \emph{regular} if the supporting hyperplane at $x$ is uniquely defined, that is if there is a unique $u\in \S^{n-1}$ such that
\begin{equation*}
	x \in H(\Omega, u)\cap \Omega.
\end{equation*}
The set of all regular points of $\partial\Omega$ is denoted by $\reg(\Omega)$. We also let $\regn(\Omega)$ denote the set of all outward pointing unit normals to $\partial\Omega$ at points of $\reg(\Omega)$.

We are now ready to define the \emph{form body} $\Omega_*$ of a set $\Omega\in \K^n_0$, which, following~\cite{Schneider1}, is defined by
\begin{equation*}
	\Omega_* = \bigcap_{u\in\regn(\Omega)} \{ x\in \R^n : \langle x, u\rangle \leq 1\}.
\end{equation*}
If $\Omega$ is a polytope then $\Omega_*$ is the polytope that has the same set of normals as $\Omega$, but with each face translated so that it is tangent to the unit ball. If instead the boundary of $\Omega$ is smooth (in which case every point is regular) then $\Omega_*=B$.
\begin{figure}[H]
 	\centering
 	\begin{tikzpicture}[scale=1/6]

% ------------------  rectangle  --------------------

	\def \xRef {4.4940};
	\def \yRef {5.5};

	\node at (-9,0) {{\,}};
	\coordinate (a1) at ({0-\xRef},{0-\yRef});
	\coordinate (a2) at ({-4-\xRef},{6-\yRef});
	\coordinate (a3) at ({4-\xRef},{10-\yRef});
	\coordinate (a4) at ({12-\xRef},{8-\yRef});
	\coordinate (a5) at ({8-\xRef},{2-\yRef});
	\draw[thick] (a1)--(a2)--(a3)--(a4)--(a5)--(a1);

 	\coordinate (c3) at ({4.4940-\xRef},{5.5-\yRef});
	
	\coordinate (m1) at ({-2-\xRef},{3-\yRef});
	\coordinate (m2) at ({0-\xRef},{8-\yRef});
	\coordinate (m3) at ({8-\xRef},{9-\yRef});
	\coordinate (m4) at ({10-\xRef},{5-\yRef});
	\coordinate (m5) at ({4-\xRef},{1-\yRef});

	\def \lref {(8.494*0.4471+0.5*0.8944)};
	\draw[dashed] (c3) circle [radius=\lref];
	\draw[<->] (c3)--({-0.4472*\lref + 4.494-\xRef},{5.5+0.8944*\lref-\yRef});
	\node at ({-0.4472*\lref/2 + 4.494+1.2-\xRef},{5.5+0.8944*\lref/2-\yRef+0.1}) {$r$};

	% \draw[->] (m1)--(-2-0.8321,3-0.5547);
	% \draw[->] (m2)--(-0.4471,8+0.8944);
	% \draw[->] (m3)--(8+0.2425,9+0.9701);
	% \draw[->] (m4)--(10+0.8321,5-0.5547);
	% \draw[->] (m5)--(4+0.2425,1-0.9701);

	\node at ({-7},{2.8}) {\scalebox{1}{$\Omega$}};

	% ------------------  Tangential  --------------------
	
	\def \xRef {17};
	\def \yRef {0};
	\draw[dashed] (\xRef,\yRef) circle [radius=\lref];
	\draw[<->] (\xRef,\yRef)--({\xRef-0.4471*\lref},{\yRef+0.8944*\lref});
	\node at ({\xRef-0.4471*\lref/2+1.2},{\yRef+0.8944*\lref/2+0.2}) {$1$};

	\draw[thick, domain={-1.4604*\lref}:{-0.4411*\lref},samples=2] plot ({\xRef+\x},{\yRef-(-0.8321*\x-\lref)/(-0.5547)}) node[right] {}; 
	\draw[thick, domain={-1.4604*\lref}:{-0.1163*\lref},samples=2] plot ({\xRef+\x},{\yRef-(-0.4472*\x-\lref)/(0.8944)}) node[right] {}; 
	\draw[thick, domain={-0.1163*\lref}:{1.6192*\lref},samples=2] plot ({\xRef+\x},{\yRef-(0.2425*\x-\lref)/(0.9701)}) node[right] {}; 
	\draw[thick, domain={1.6192*\lref}:{0.6176*\lref},samples=2] plot ({\xRef+\x},{\yRef-(0.8321*\x-\lref)/(-0.5547)}) node[right] {}; 
	\draw[thick, domain={0.6176*\lref}:{-0.4411*\lref},samples=2] plot ({\xRef+\x},{\yRef-(0.2425*\x-\lref)/(-0.9701)}) node[right] {}; 

	\node at ({\xRef-5},{\yRef+3.85}) {\scalebox{1}{$\Omega_*$}};

	% ------------------  Piecewise smooth  --------------------

	\def \xRef {38};
	\def \yRef {0};

	\draw[dashed] (\xRef,\yRef) circle [radius=\lref];
	\draw[<->] (\xRef,\yRef)--({\xRef-0.4471*\lref},{\yRef+0.8944*\lref});
	\node at ({\xRef-0.4471*\lref/2+1.2},{\yRef+0.8944*\lref/2+0.2}) {$r$};

	\coordinate (a1) at ({-4.4940+\xRef},{-5.5+\yRef});
	\coordinate (a2) at ({-4-4.4940+\xRef},{6-5.5+\yRef});
	\coordinate (a3) at ({4-4.4940+\xRef},{10-5.5+\yRef});
	\coordinate (a4) at ({12-4.4940+\xRef},{8-5.5+\yRef});
	\coordinate (a5) at ({8-4.4940+\xRef},{2-5.5+\yRef});
	\draw[thick] (a2)--(a3)--(a4);

	\coordinate (m1) at ({-2-4.494+\xRef},{3-5.5+\yRef});
	\coordinate (m2) at ({0-4.4940+\xRef},{8-5.5+\yRef});
	\coordinate (m3) at ({8-4.4940+\xRef},{9-5.5+\yRef});
	\coordinate (m4) at ({10-4.4940+\xRef},{5-5.5+\yRef});
	\coordinate (m5) at ({4-4.4940+\xRef},{1-5.5+\yRef});

	\draw[thick,rounded corners=5mm] (a2)--(a1)--(m5);

	\draw[thick,rounded corners=4mm] (m5)--(a5)--(a4);

  	\node at ({-7+\xRef},{3+\yRef}) {\scalebox{1}{$\widetilde\Omega$}};

  	% ------------------  tangential 2  --------------------

  	\def \xRef {55};
	\def \yRef {0};

	\draw[dashed] (\xRef,\yRef) circle [radius=\lref];
	\draw[<->] (\xRef,\yRef)--({\xRef-0.4471*\lref},{\yRef+0.8944*\lref});
	\node at ({\xRef-0.4471*\lref/2+1.2},{\yRef+0.8944*\lref/2+0.2}) {$1$};

	\draw[thick, domain={-1.4604*\lref}:{-0.1163*\lref},samples=2] plot ({\xRef+\x},{\yRef-(-0.4472*\x-\lref)/(0.8944)}) node[right] {}; 
	\draw[thick, domain={-0.1163*\lref}:{1.6192*\lref},samples=2] plot ({\xRef+\x},{\yRef-(0.2425*\x-\lref)/(0.9701)}) node[right] {};

	\draw[thick, domain={-1.4604*\lref}:{-0.8321*\lref},samples=2] plot ({\xRef+\x},{\yRef-(-0.8321*\x-\lref)/(-0.5547)}) node[right] {}; 
	
	\draw[thick, domain={1.6192*\lref}:{0.8321*\lref},samples=2] plot ({\xRef+\x},{\yRef-(0.8321*\x-\lref)/(-0.5547)}) node[right] {}; 
	
	\draw[thick, domain={-0.5879}:{-2.5537}, samples=20] plot ({\xRef+\lref*cos(\x r)},{\yRef+\lref*sin(\x r)}) node[right] {};

	%\draw[thick, domain={0.6176*\lref}:{-0.4411*\lref},samples=2] plot ({\xRef+\x},{\yRef-(0.2425*\x-\lref)/(-0.9701)}) node[right] {}; 

	\node at ({\xRef-5},{\yRef+4.1}) {\scalebox{1}{$\widetilde\Omega_*$}};
  	
\end{tikzpicture}
 	\caption{The form body of two convex sets in $\R^2$.}
 	\label{fig:OmegaT}
\end{figure} 
The following lemma will be needed in our main proof and is an almost direct consequence of the definitions of $\Omega_*$ and $r$ combined with the fact that almost every point of $\partial\Omega$ is regular. 
\begin{lemma}\label{lem:InclusionFormBody}
 	Let $\Omega \in \K^n_0$ have inradius $r$. Then there exists $x\in \R^n$ such that $x+r\Omega_* \subseteq \Omega$.
\end{lemma}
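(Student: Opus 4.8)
The plan is to place a largest inscribed ball with its centre at the origin and then compare the half-space representations of $\Omega$ and of $r\Omega_*$. By definition of the inradius there is a point $x_0$ with $B_r(x_0)\subseteq\Omega$. Since the set $\regn(\Omega)$ of regular normals—and hence the form body $\Omega_*$—is invariant under translations of $\Omega$, after replacing $\Omega$ by $\Omega-x_0$ we may assume $B_r(0)\subseteq\Omega$, and it suffices to prove $r\Omega_*\subseteq\Omega$; the assertion then holds with $x=x_0$.

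First I record the elementary consequence of $B_r(0)\subseteq\Omega$ for the support function: for every $u\in\S^{n-1}$,
\[
  h(\Omega,u)=\sup_{y\in\Omega}\langle y,u\rangle\geq\sup_{y\in B_r(0)}\langle y,u\rangle=r .
\]
In particular $h(\Omega,u)\geq r$ for every $u\in\regn(\Omega)$, so each defining half-space of $r\Omega_*$ sits inside the corresponding supporting half-space of $\Omega$. Intersecting these inclusions over $u\in\regn(\Omega)$ and using $r\Omega_*=\bigcap_{u\in\regn(\Omega)}\{x:\langle x,u\rangle\leq r\}$ gives
\[
  r\Omega_*\subseteq\bigcap_{u\in\regn(\Omega)}\{x:\langle x,u\rangle\leq h(\Omega,u)\}.
\]

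The second step, which is the crux, is to recognise the right-hand side as $\Omega$ itself, that is, to show that a convex body is recovered from the supporting half-spaces at its regular normals alone:
\[
  \Omega=\bigcap_{u\in\regn(\Omega)}\{x:\langle x,u\rangle\leq h(\Omega,u)\}.
\]
The inclusion $\subseteq$ is immediate from the definition of $h(\Omega,\cdot)$. For the reverse inclusion one invokes that almost every point of $\partial\Omega$ is regular, so that the regular points are dense in $\partial\Omega$, together with the standard fact that $\Omega$ is the intersection of all of its supporting half-spaces. Given $z\notin\Omega$ one separates $z$ from $\Omega$ by some supporting hyperplane with normal $u_0$ and then realises $u_0$ as a nonnegative combination $u_0=\sum_i\alpha_i u_i$ of normals $u_i\in\regn(\Omega)$ attained at regular points near the contact set of $u_0$; subadditivity and positive homogeneity of $h(\Omega,\cdot)$ then force $\langle x,u_0\rangle\leq h(\Omega,u_0)$ to follow from the inequalities $\langle x,u_i\rangle\leq h(\Omega,u_i)$, so that one of the latter already excludes $z$.

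I expect this second step to be the genuine obstacle. It is precisely here that the almost-everywhere regularity of $\partial\Omega$ is used, and it must be carried out by an approximation argument at the level of normal cones rather than by a crude density statement: the set $\regn(\Omega)$ need not be dense in $\S^{n-1}$, as already a polytope shows, yet the associated half-spaces still cut out $\Omega$. Once the representation is established, the two displayed inclusions combine to give $r\Omega_*\subseteq\Omega$, and undoing the translation by $x_0$ finishes the proof.
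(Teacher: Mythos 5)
The paper states this lemma without proof, remarking only that it is ``an almost direct consequence of the definitions\dots combined with the fact that almost every point of $\partial\Omega$ is regular,'' so there is no written argument to compare against; your route (centre the inball at the origin, note $h(\Omega,u)\geq r$ for all $u$, and then identify $\bigcap_{u\in\regn(\Omega)}\{x:\langle x,u\rangle\leq h(\Omega,u)\}$ with $\Omega$) is clearly the intended one, and the first two steps are correct. Moreover, the representation you single out as the crux is precisely \cite[Theorem~2.2.6]{Schneider1}, which the paper already invokes in the proof of Lemma~\ref{lemma:MaxProblem}; citing it would finish the proof cleanly.

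The one genuine problem is your sketched derivation of that representation. If $u_0=\sum_i\alpha_i u_i$ with $\alpha_i\geq 0$ and $u_i\in\regn(\Omega)$, then from $\langle z,u_i\rangle\leq h(\Omega,u_i)$ you get $\langle z,u_0\rangle\leq\sum_i\alpha_i h(\Omega,u_i)$, while subadditivity of the support function gives $h(\Omega,u_0)\leq\sum_i\alpha_i h(\Omega,u_i)$ --- both estimates bound quantities by the \emph{same} sum, so you cannot conclude $\langle z,u_0\rangle\leq h(\Omega,u_0)$. The conic-combination argument only closes when the $u_i$ all lie in the normal cone of one common boundary point $y_0$, because then $h(\Omega,u_i)=\langle y_0,u_i\rangle$ and the subadditivity becomes an equality; but the normal cone at a singular point need not be generated by regular normals attained \emph{at that point} (a smooth body with a single conical corner has no regular normals there at all). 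The standard proof goes instead through the metric projection: for $z\notin\Omega$ whose nearest point in $\Omega$ is regular, the associated normal already separates $z$ strictly, and the exceptional $z$ are handled by perturbation using the almost-everywhere regularity of the boundary. So either carry out that argument or, more economically, cite \cite[Theorem~2.2.6]{Schneider1} as the paper itself does elsewhere.
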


% ------------------  Proofs  --------------------

\FloatBarrier
\section{Proof of the main result}

The idea of the proof is as follows: Given a set $\Omega\in \K_0^n$ and $t\geq0$ we construct a convex set $\wOmega$ such that $\wOmega_t=\Omega$ and $|\partial \wOmega|\geq |\partial\widehat\Omega|$ for any other set $\widehat{\Omega}\in \K_0^n$ satisfying $\widehat\Omega_t=\Omega$. If we can prove \thmref{thm:MainRin} for such $\wOmega$ it clearly holds also for any other convex set satisfying $\widehat\Omega_t=\Omega$. Since the choice of $\Omega$ and $t$ was arbitrary this completes the proof.

We begin by constructing the set $\wOmega$. In the case where $\Omega$ is a polygon this problem has the fairly intuitive solution that $\wOmega$ is the polygon with the same faces as $\Omega$, only moved a distance $t$ along their outward pointing normals. The following lemma tells us that a generalization of this intuitive solution actually works for any possible $\Omega$.

\begin{lemma}\label{lemma:MaxProblem}
	Let $\Omega\in \K^n_0$ and let $\Omega_*$ denote its form body. Then, for any $t\geq0$ the maximization problem 
	\begin{equation*}
		\max\{|\partial \widehat\Omega| : \widehat\Omega \in \K^n_0,\; \widehat\Omega_t=\Omega\} 
	\end{equation*}
	is solved by
	\begin{equation*}
		\widetilde \Omega = \bigcap_{u\in \mathcal{U}(\Omega)}\{x\in \R^n: \langle x, u\rangle \leq h(\Omega, u)+t\}.
	\end{equation*}
\end{lemma}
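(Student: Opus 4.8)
The plan is to establish two facts and then invoke monotonicity of mixed volumes. First, that $\wOmega = \Omega + t\Omega_*$ is itself admissible, i.e. $\wOmega_t = \Omega$; second, that every competitor $\widehat\Omega$ with $\widehat\Omega_t = \Omega$ satisfies $\widehat\Omega \subseteq \wOmega$. Granting these, the lemma follows at once: writing the perimeter as the mixed volume $|\partial K| = nW(B, K, \dots, K)$ and using that $W$ is monotone with respect to inclusions, $\widehat\Omega \subseteq \wOmega$ gives $|\partial\widehat\Omega| \le |\partial\wOmega|$, while $\wOmega$ is admissible, so it realizes the maximum. Two elementary properties of the form body will be used throughout: since $\Omega_*$ is an intersection of half-spaces tangent to $B$ we have $B \subseteq \Omega_*$, and for every $u \in \regn(\Omega)$ the hyperplane $\{\langle x, u\rangle = 1\}$ both supports $\Omega_*$ and contains a point of $B \subseteq \Omega_*$, so that $h(\Omega_*, u) = 1$.

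For admissibility I would argue via support functions. The inclusion $\Omega \subseteq \wOmega_t$ is the easy half: from $B \subseteq \Omega_*$ we get $\Omega + tB \subseteq \Omega + t\Omega_* = \wOmega$, and $\Omega + tB \subseteq \wOmega$ is by definition equivalent to $\Omega \subseteq \wOmega \sim tB$. For the reverse inclusion take $x$ with $x + tB \subseteq \wOmega$. For each $u \in \regn(\Omega)$, comparing support functions and using $h(B,u) = h(\Omega_*, u) = 1$ gives $\langle x, u\rangle + t \le h(\Omega, u) + t$, that is $\langle x, u\rangle \le h(\Omega, u)$. The point is then that $\Omega$ is recovered from its regular supporting half-spaces, $\Omega = \bigcap_{u\in\regn(\Omega)}\{x : \langle x, u\rangle \le h(\Omega, u)\}$, a standard consequence of the density of $\reg(\Omega)$ in $\partial\Omega$; hence $x \in \Omega$ and $\wOmega_t = \Omega$.

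For the containment of an arbitrary admissible $\widehat\Omega$ the first step is to pin down $h(\widehat\Omega, \cdot)$ on $\regn(\Omega)$. Fix $u\in\regn(\Omega)$ and let $x$ be the point of $\reg(\Omega)$ with outer normal $u$. Since $x \in \partial\Omega = \partial(\widehat\Omega \sim tB)$, the third listed property of support functions yields a normal of $\partial\Omega$ at $x$ realizing $h(\widehat\Omega \sim tB, \cdot) = h(\widehat\Omega, \cdot) - h(tB, \cdot)$; regularity forces this normal to be $u$, whence $h(\widehat\Omega, u) = h(\Omega, u) + t$. Consequently $\widehat\Omega \subseteq P$, where $P := \bigcap_{u\in\regn(\Omega)}\{x : \langle x, u\rangle \le h(\Omega, u) + t\}$. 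It then remains to identify $P$ with $\wOmega$: the inclusion $\wOmega \subseteq P$ is immediate because $h(\wOmega, u) = h(\Omega, u) + t$ for $u \in \regn(\Omega)$, and the opposite inclusion is what closes the argument.

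The crux, and the step I expect to be most delicate, is precisely $P \subseteq \wOmega$, i.e. that $\Omega + t\Omega_*$ is already cut out by the supporting half-spaces of $\Omega$ in regular directions alone. I would establish this through the reconstruction principle applied to $\wOmega$ together with the observation that, by its very definition, the form body carries the same normal structure as $\Omega$; concretely, $\regn(\wOmega) \subseteq \overline{\regn(\Omega)}$, since a regular normal of the Minkowski sum $\Omega + t\Omega_*$ lies in the closure of the normals of each summand and the normals of $\Omega_*$ are by construction limits of normals in $\regn(\Omega)$. Writing $\wOmega = \bigcap_{u\in\regn(\wOmega)}\{x : \langle x,u\rangle \le h(\wOmega, u)\}$ and enlarging the index set to $\overline{\regn(\Omega)}$, which by continuity adds no effective constraints, then gives $\wOmega \supseteq P$. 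Both this step and the admissibility argument ultimately rest on the same two ingredients, namely that regular boundary points are dense (so a convex body is the intersection of its regular supporting half-spaces) and that $\Omega$ and $\Omega_*$ share their normals, so the remaining work lies in making these precise and in controlling the behaviour of $\regn$ under Minkowski addition.
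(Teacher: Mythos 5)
Your overall strategy coincides with the paper's: show that every admissible $\widehat\Omega$ satisfies $\widehat\Omega\subseteq P:=\bigcap_{u\in\regn(\Omega)}\{x:\langle x,u\rangle\le h(\Omega,u)+t\}$, identify $P$ with $\wOmega=\Omega+t\Omega_*$, check that $\wOmega$ is itself admissible, and conclude by monotonicity of the perimeter under inclusion. Your admissibility argument and the derivation of $\widehat\Omega\subseteq P$ from the third support-function property are correct and essentially identical to the paper's (your admissibility half is, if anything, cleaner than the paper's, which argues via $\dist(x,\partial\wOmega)\le t$ at regular points and then uses density of $\reg(\Omega)$).

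The gap is exactly where you predicted it: the inclusion $P\subseteq\wOmega$. You reduce it to $\regn(\wOmega)\subseteq\overline{\regn(\Omega)}$, which you justify by the general principle that a regular normal of a Minkowski sum lies in the closure of the (regular) normals of each summand. That principle is false. Take $K$ a square in $\R^2$ and $L=B$: then $K+L$ is of class $C^1$, so $\regn(K+L)=\S^1$, whereas $\overline{\regn(K)}$ consists of the four edge normals. The point is that $u\in\regn(K+L)$ exactly when there are boundary points $x\in\partial K$, $y\in\partial L$ whose normal cones meet precisely in the ray through $u$, and this can happen with $u$ far from $\regn(K)$ whenever the normal cones of $L$ are small (as for $L=B$). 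So the inclusion $\regn(\Omega+t\Omega_*)\subseteq\overline{\regn(\Omega)}$, while correct, is a special property of the form body: it must be extracted from the definition of $\Omega_*$ as $\bigcap_{u\in\regn(\Omega)}\{x:\langle x,u\rangle\le1\}$ (roughly, wherever $\Omega$ has a normal cone strictly larger than a ray in a direction outside $\overline{\regn(\Omega)}$, so does $\Omega_*$, and the intersection of the two cones remains larger than a ray), and is not a formal consequence of how $\regn$ behaves under Minkowski addition. The paper sidesteps this entirely by citing Schneider, Theorem 2.2.6, for the identity $\bigcap_{u\in\regn(\Omega)}\{x:\langle x,u\rangle\le h(\Omega+t\Omega_*,u)\}=\Omega+t\Omega_*$; to close your argument you would need either to invoke that result or to carry out the normal-cone analysis for the specific pair $(\Omega,\Omega_*)$. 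Note finally that the lemma genuinely requires $P=\wOmega$ and not merely $\wOmega\subseteq P$: the same support-function computation shows $P_t=\Omega$, so $P$ is itself admissible, and if $\wOmega$ were a proper subset of $P$ the maximum would be attained at $P$ rather than at $\wOmega$.
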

\begin{remark}
	\textnormal{In the original version of this paper the set $\widetilde\Omega$ was erroneously identified as $\Omega +t\Omega_*$. The error was pointed out in~\cite{cifre_isoperimetric_2019} and addressed in the corrigendum~\cite{corrigendum}.}
\end{remark}

\begin{proof}
	Recalling the properties of the support function, we have that for any $x\in \partial\Omega = \partial (\hOmega\sim tB)$ there exists a $u\in \S^{n-1}$ normal to $\partial \Omega$ at $x$ such that
	\begin{equation*}
		h(\hOmega \sim tB, u)=h(\hOmega, u)- h(t B, u)=h(\hOmega, u)-t.
	\end{equation*}
	Rearranging this we find for all $u\in\regn(\Omega)$ that $h(\hOmega, u)=h(\Omega, u)+t$. Therefore, it follows that
	\begin{equation*}
		\hOmega \subseteq \bigcap_{u\in \regn(\Omega)}\{ x\in \R^n : \langle x, u\rangle \leq h(\Omega, u)+t\} = \widetilde \Omega.
	\end{equation*}
	Since the perimeter is increasing under inclusion of convex sets, we conclude that $|\partial\hOmega|\leq |\partial \widetilde\Omega|$. 

	What remains to complete the proof is to show that $\widetilde\Omega$ is an admissible set in the above maximization problem. That $\Omega \subseteq \widetilde\Omega_t$ follows from the argument above so we only need to establish the opposite inclusion. Let $x\in \reg(\Omega)$. Then, with $u$ being the unique normal to $\partial\Omega$ at $x$, we have that
	\begin{equation*}
		h(\widetilde \Omega, u)=h(\Omega, u)+t.
	\end{equation*}
	Since a convex body can be written as the intersection of its supporting half-spaces we conclude that $x+t u \in \overline{\widetilde\Omega^c}$ implying that $\dist(x, \partial \widetilde\Omega)\leq t$. Combining this with the inclusion of $\Omega$ in $\widetilde\Omega_t$ we find that $\reg(\Omega)\in \partial\widetilde\Omega_t$. Since almost every point of $\partial\Omega$ is regular the statement follows.
\end{proof}

We are now ready to prove \thmref{thm:MainRin}. Let $t\geq 0$ and let $\Omega \in \K^n_0$ have inradius $r$. By the above lemma we have, for any convex body $\hOmega$ such that $\hOmega_t=\Omega$, the bound $|\partial \hOmega|\leq |\partial \widetilde\Omega|$ with $\widetilde\Omega$ as in Lemma~\ref{lemma:MaxProblem}, and by Lemma~\ref{lem:InradiusLemma} any such $\hOmega$ has the inradius $\hat{r}=r+t$. Thus it is sufficient to prove that
\begin{equation*}
	|\partial\Omega|\geq \Bigl(1- \frac{t}{ \hat r}\Bigr)_+^{n-1}|\partial\widetilde\Omega|,
\end{equation*}
or equivalently
\begin{equation}\label{eq: goal}
	\Bigl(1+ \frac{t}{r}\Bigr)^{n-1}|\partial\Omega|\geq |\partial\widetilde\Omega|
\end{equation}
We shall argue that $\widetilde\Omega \subseteq \bigl(1+\frac{t}{r}\bigr)\Omega$ from which~\eqref{eq: goal} follows by monotonicity of the perimeter under inclusion of convex bodies and scaling. Assume without loss that $B_r(0)\subseteq \Omega$, then $h(\Omega, u)\geq r$ for all $u\in \S^{n-1}$. Therefore,
\begin{align*}
 	\widetilde\Omega &= \bigcap_{u\in \mathcal{U}(\Omega)}\{x\in \R^n: \langle x, u\rangle \leq h(\Omega, u)+ t\}\\
 	&\subseteq 
 	\bigcap_{u\in \mathcal{U}(\Omega)}\Bigl\{x\in \R^n: \langle x, u\rangle \leq \Bigl(1+ \frac{t}{r}\Bigr)h(\Omega, u)\Bigr\} = \Bigl(1+ \frac{t}{r}\Bigr)\Omega,
 \end{align*} 
which proves the claimed inclusion. Moreover, it is clear that equality holds if and only if $h(\Omega, u)=r$ for all $u\in \mathcal{U}(\Omega)$ that is when $\Omega$ is homothetic to its form body. This completes the proof of Theorem~\ref{thm:MainRin}.

Deducing Corollary~\ref{cor:CorMainWidth} is simply a matter of applying the following theorem due to Steinhagen~\cite{Steinhagen}. We note that this theorem appeared in the case of planar convex bodies in earlier work by Blaschke~\cite{Blaschke}, and this simpler case is sufficient for proving the inequality conjectured in~\cite{LaptevGeisingerWeidl}.

\begin{theorem}[Steinhagen's inequality~\cite{Steinhagen}]
	Let $\Omega\in \K^n_0$ have inradius $r$ and width $\omega$. Then the following two-sided inequality holds:
	\begin{align*}
		2r\leq \omega &\leq 2\sqrt{n}\, r \hspace{31pt}\textrm{if n is odd},\\
		2r\leq \omega &\leq \frac{2(n+1)}{\sqrt{n+2}} r \quad\textrm{ if n is even.}
	\end{align*}
	The lower bound is attained if $\Omega$ is a ball, and the upper bound is attained if $\Omega$ is a regular $(n+1)$-simplex.
\end{theorem}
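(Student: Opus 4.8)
The plan is to prove the two inequalities separately. The lower bound $\omega \geq 2r$ is immediate: the inball $B_r(x_0)\subseteq\Omega$ lies between every pair of parallel supporting hyperplanes of $\Omega$, so the distance between any such pair is at least the diameter $2r$ of the ball; taking the infimum over directions gives $\omega\geq 2r$, with equality when $\Omega$ is a ball. All of the work is in the upper bound, which is equivalent to the lower bound $r\geq \omega/c_n$ on the inradius.

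For the upper bound I would first reduce to a simplex by means of the inball contact points. Placing the incenter at the origin, a standard variational argument (if all contact points lay in an open half-space one could translate the ball into that half-space and enlarge it, contradicting maximality of $r$) shows that the origin lies in the interior of the convex hull of the contact set $\partial B_r\cap\partial\Omega$. By Carath\'eodory one may select contact points $p_1,\dots,p_N$ with weights $\lambda_i>0$ such that $\sum_i\lambda_i p_i=0$; positive spanning of $\R^n$ requires $N\geq n+1$, and taking $N=n+1$ in general position (other configurations only decrease the relevant ratio) makes the bounding polytope a simplex. Writing $u_i=p_i/r\in\S^{n-1}$ for the outward unit normals, the hyperplane tangent to $B_r$ at $p_i$ supports $\Omega$, so $h(\Omega,u_i)=r$ and hence
\[
  B_r \subseteq \Omega \subseteq P := \bigcap_{i=1}^{n+1}\{x\in\R^n : \langle x,u_i\rangle \le r\}.
\]
Because the $u_i$ positively span $\R^n$, the simplex $P$ has inradius exactly $r$ with incenter $0$. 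Since the width is monotone under inclusion (if $\Omega\subseteq P$ then $h(\Omega,v)+h(\Omega,-v)\le h(P,v)+h(P,-v)$ for every $v$, and taking the infimum preserves the inequality), we get $\omega(\Omega)\le\omega(P)$ while $r(\Omega)=r(P)=r$. Thus it suffices to prove the inequality for simplices and to determine when it is sharp.

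For a simplex it remains to bound $\omega(P)/r(P)$. Here I would use that, since the width is a \emph{minimum} over directions, it is enough to exhibit one thin direction. Labelling the vertices $V_0,\dots,V_n$ of $P$, the thickness in a unit direction $v$ is $\max_j\langle V_j,v\rangle-\min_j\langle V_j,v\rangle$, and choosing $v$ along the line separating the centroids of a two-colouring $\{V_j\}_{j\in S}$ and $\{V_j\}_{j\notin S}$ yields an explicit upper bound on the width in terms of that partition. Minimising over partitions, the most balanced splitting is optimal, which is exactly where the parity of $n$ enters: for $n$ odd the $n+1$ vertices split evenly, whereas for $n$ even the best split is unbalanced by one, producing the two different constants. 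The remaining and genuinely hardest step is the extremal analysis: one must show that among all simplices of inradius $r$ the ratio $\omega/r$ is maximised by the regular simplex. I would establish this by a symmetrisation argument on the configuration of contact normals $u_i$ subject to $\sum_i\lambda_i u_i=0$, showing that any asymmetry can be reduced without decreasing the width-to-inradius ratio, so that the optimum is attained at the symmetric configuration.

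Finally I would evaluate the ratio for the regular simplex. With incenter at the origin its normals satisfy $\langle u_i,u_j\rangle=-1/n$ for $i\neq j$, and a direct computation of the thickness across the optimal balanced bipartition gives $\omega=2\sqrt{n}\,r$ when $n$ is odd and $\omega=\tfrac{2(n+1)}{\sqrt{n+2}}\,r$ when $n$ is even, matching the stated constants (the cases $n=1,2,3$ serve as a sanity check, recovering segment, equilateral triangle, and the edge-to-edge width of the regular tetrahedron). Combined with the reduction this proves the upper bound and identifies the regular $(n+1)$-simplex as the equality case. The main obstacle throughout is the extremal step isolating the regular simplex; the contact-point reduction and the final evaluation are comparatively routine.
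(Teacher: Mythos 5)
The paper does not prove this theorem at all: it is quoted as a classical result of Steinhagen~\cite{Steinhagen} (with the planar case attributed to Blaschke) and is used as a black box to pass from Theorem~\ref{thm:MainRin} to Corollary~\ref{cor:CorMainWidth}. So there is no proof in the paper to compare yours against; your proposal has to stand on its own as a proof of Steinhagen's inequality, and on those terms it is an outline of the classical strategy rather than a proof.

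The easy parts are fine: the lower bound $\omega\geq 2r$, the monotonicity of the width under inclusion, the reduction via inball contact points and Carath\'eodory to a simplex circumscribed about $B_r$, and the final evaluation of $\omega/r$ for the regular simplex (your constants and the parity dichotomy coming from balanced versus unbalanced bipartitions of the vertices are correct). The genuine gap is exactly where you locate the difficulty: the claim that among all simplices circumscribed about $B_r$ the regular one maximizes the width is asserted, not proved. ``A symmetrisation argument on the configuration of contact normals, showing that any asymmetry can be reduced without decreasing the width-to-inradius ratio'' is a placeholder, not an argument: you would need to specify the symmetrisation operation, show it preserves the admissibility constraint $\sum_i\lambda_i u_i=0$ with $\lambda_i>0$, and show it does not decrease the width, which is itself an infimum over directions of a quantity determined by a linear program in the $u_i$ (namely $h(P,v)=r\min\{\sum_i\mu_i:\sum_i\mu_i u_i=v,\ \mu_i\geq0\}$). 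Optimizing such a min over a non-convex configuration space is the entire content of Steinhagen's theorem, and it is precisely the step for which Steinhagen's original argument is nontrivial. Two further loose ends feed into the same hole: when Carath\'eodory produces fewer than $n+1$ contact normals the polytope $P$ is unbounded, and your remark that ``other configurations only decrease the relevant ratio'' needs justification (these cases are in fact easy, since $P$ is then slab-like and its width is close to $2r$, but that must be said); and your bipartition-direction bound on $\omega(P)$ for a general, non-regular simplex only yields the stated constants after one knows the extremal configuration, so it cannot substitute for the missing extremal analysis.
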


\end{document}